\documentclass[11pt,letterpaper,reqno]{amsart}

\usepackage{amssymb, amsmath, amsthm}
\usepackage[colorlinks=true, urlcolor=blue, linkcolor=blue, citecolor=blue]{hyperref}
\usepackage[alphabetic,lite,nobysame]{amsrefs}
\usepackage{verbatim}
\usepackage{amscd}   % for commutative diagrams
\usepackage[all]{xy} % for complicated commutative diagrams
\usepackage{youngtab} % for Young tableaux
\usepackage{young} % for Young tableaux
\usepackage{ytableau}
\usepackage{tikz}
\usepackage{ mathrsfs }
\usepackage{cases}
\usepackage{array}
\usepackage{cellspace}
\usepackage{tabu}
\usepackage{calligra,mathrsfs}
\usepackage{bm}
\usepackage{mathtools}
\usepackage{tikz-cd}

\usepackage[margin=1in]{geometry}

\DeclareMathOperator{\ShHom}{\mathscr{H}\text{\kern -3pt {\calligra\large om}}\,}

\newcommand{\CC}{\mathbb{C}}

\newcommand{\m}{\mathfrak{m}}

\newcommand{\Tor}{\operatorname{Tor}}

\newcommand{\rk}{\operatorname{rank}}

\newcommand{\Sym}{\operatorname{Sym}}

\newcommand{\coker}{\operatorname{coker}}

\renewcommand{\ker}{\operatorname{ker}}

\newcommand{\bb}[1]{\mathbb{#1}}

\newcommand{\mc}[1]{\mathcal{#1}}

\def\PP{{\mathbb P}}
\def\lra{\longrightarrow}

\newtheorem{theorem}{Theorem}[section]
\newtheorem*{theorem*}{Theorem}
\newtheorem*{problem*}{Problem}
\newtheorem{lemma}[theorem]{Lemma}
\newtheorem{conjecture}[theorem]{Conjecture}

\newtheorem*{corollary*}{Corollary}

\theoremstyle{definition}

\newtheorem*{definition*}{Definition}

\theoremstyle{remark}

\newtheorem*{remark*}{Remark}

\numberwithin{equation}{section}

\begin{document}
\title{A proof of the Eisenbud--Huneke--Ulrich conjecture}
\author{Fuxiang Yang}
\address{Department of Mathematics, University of Notre Dame, 255 Hurley, Notre Dame, IN 46556}
\email{fyang6@nd.edu}

\subjclass[2020]{Primary 13D02}

\date{\today}

\keywords{}

\begin{abstract}
    We prove the Eisenbud--Huneke--Ulrich conjecture on powers of $\mathfrak{m}$-primary ideals with partially linear minimal free resolutions in characteristic zero.
\end{abstract}
\maketitle
\vspace{-.2in}
\section{Introduction}\label{sec:intro}
Let $S = \CC[x_0,\dots,x_n]$, $\m = (x_0,\dots,x_n)$, and $I$ be an ideal of $S$ generated by homogeneous polynomials $f_0,\dots,f_m$ of degree $d$. Hilbert's Nullstellensatz implies that the following are equivalent:
\begin{enumerate}
    \item\label{enu:1} $F\in I$ for every homogeneous polynomial $F\in S$ of sufficiently large degree, that is,
    \begin{equation}\label{eq:mprimary}
        I_t = \m_t \quad \text{for all sufficiently large }t.
    \end{equation}
    \item The linear series $(V,\mc{O}_{\PP^n}(d))$ is basepoint-free, where $V= \operatorname{span}(f_0,\dots,f_m)$.
    \item The common zero locus of $f_0,\dots,f_m$ is supported at the origin.
\end{enumerate}
We say that $I$ is \textbf{$\m$-primary} if these conditions hold. 
Eisenbud, Huneke, and Ulrich proved in \cite{EHU}*{Theorem~1.2} that if $I$ is $\m$-primary and linearly presented, then $(V,\mc{O}_{\PP^n}(d))$ is very ample. This amounts to the stronger condition that every homogeneous polynomial $F$ of sufficiently large degree divisible by $d$ can be written as a linear combination of monomials in $f_0,\dots,f_m$.~Equivalently,
\begin{equation}\label{eq:EHUasymp}
    I^t = \m^{td} \quad \text{for all sufficiently large }t.
\end{equation}
Assume that $I$ is $\m$-primary. While Macaulay established an effective bound for (\ref{eq:mprimary}) \cite{Macaulay}*{\S86}, a corresponding bound for (\ref{eq:EHUasymp}) remained mysterious. Eisenbud, Huenke, and Ulrich conjectured such a bound in terms of the linear syzygies of $I$. We say that the resolution of $I$ is \textbf{linear for $p$ steps} if the linear strand of the minimal free resolution of $I$
\begin{equation}\label{eq:linearStrand}
    \cdots \lra S(-d-p)^{\beta_{p,d}} \lra S(-d-p+1)^{\beta_{p-1,d}} \lra \cdots \lra S(-d)^{\beta_{0,d}} \lra I \lra 0
\end{equation}
is exact through the first $p-1$ steps. In particular, the case $p = 1$ means that $I$ is linearly presented.
\begin{conjecture}[\cite{EHU}*{Conjecture~1.4}]\label{conj:EHU}
    For $p \ge 1$, if $I$ is an $\m$-primary ideal generated in degree $d$ such that its minimal free resolution is linear for $p$ steps, then
    \[I^t = \m^{td} \quad \text{for all }t \ge \left\lceil \frac{n}{p}\right\rceil.\]
\end{conjecture}
\noindent
Eisenbud, Huneke, and Ulrich proved Conjecture~\ref{conj:EHU} for monomial ideals \cite{EHU}*{Theorem~8.1} and for $p \ge n/2$ \cite{EHU}*{Corollary~7.7}.  We previously studied this problem through Castelnuovo--Mumford regularity \cite{yang} and global generation of higher syzygy bundles \cite{yang2}. These methods led to a proof of Conjecture~\ref{conj:EHU} in the case $p = 1$ and gave a non-optimal effective bound for all values of $p$. In this paper, we introduce a surprisingly simple idea based on the theory of Schur complexes to prove Conjecture~\ref{conj:EHU} in a more general setting. We say that the resolution of $I$ is \textbf{virtually linear for $p$ steps} if the sheafification of (\ref{eq:linearStrand}) is exact through the first $p-1$ steps.
\begin{theorem}\label{thm:EHU}
    For $p \ge 1$, if $I$ is an $\m$-primary ideal generated in degree $d$ such that its minimal free resolution is virtually linear for $p$ steps, then
    \[I^t = \m^{td} \quad \text{for all }t \ge \left\lceil \frac{n}{p}\right\rceil.\]
\end{theorem}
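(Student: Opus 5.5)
Set $V=\operatorname{span}(f_0,\dots,f_m)$ and write $E$ for the sheafified linear strand, so $\widetilde{S(-d-i)}=\mathcal{O}_{\PP^n}(-d-i)$. Since $I$ is $\m$-primary, sheafifying $0\to I\to S(-d)^{\beta_{0,d}}\to\cdots$ shows the evaluation map $V\otimes\mathcal{O}(-d)\to\mathcal{O}$ is surjective with kernel the syzygy bundle $M$. Virtual linearity for $p$ steps then gives an exact complex of vector bundles
\[
0\to K\to \mathcal{O}(-d-p)^{\beta_{p,d}}\to\cdots\to\mathcal{O}(-d-1)^{\beta_{1,d}}\to M\to 0,
\]
with $K$ a bundle. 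Twisting by $d$, the map $\mathcal{O}(-1)^{\beta_{1,d}}\otimes\mathcal{O}(d)\to M(d)$ is surjective and $M(d)$ has a left resolution whose $i$-th term is a sum of copies of $\mathcal{O}(-i)$ for $i\le p$. Write $N=M(d)$, the kernel of $V\otimes\mathcal{O}\to\mathcal{O}(d)$.

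The goal $I^t=\m^{td}$ is equivalent to surjectivity of $\Sym^tV\to H^0(\mathcal{O}(td))$. I plan to get this from the Koszul/Schur complex for $\Sym^t$ of the two-term complex $N\to V\otimes\mathcal{O}$. Its cokernel is $\mathcal{O}(d)$, and $N$ is a bundle, so it is exact:
\[
0\to\wedge^tN\to\wedge^{t-1}N\otimes V\to\cdots\to N\otimes\Sym^{t-1}V\to\Sym^tV\otimes\mathcal{O}\to\mathcal{O}(td)\to0 .
\]
Chasing cohomology, surjectivity on global sections follows once $H^{j}(\wedge^{j+1}N)=0$ for $1\le j\le t-1$; the terms with $j\ge n+1$ vanish automatically. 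So it is enough to prove
\[
H^j(\wedge^{j+1}N)=0\quad\text{for } 1\le j\le \min(t-1,n).
\]

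Now I use the partial linear resolution $L_\bullet\to N$, where $L_i=\mathcal{O}(-i)^{b_i}$ for $1\le i\le p$ and the last step is truncated by the bundle $K'=K(d)$. The Schur complex functor $\wedge^{k}$ applied to this resolution (an exact complex of bundles) gives a resolution of $\wedge^kN$ by Schur-complex terms. These are sums of $\wedge^{a_1}L_1\otimes D_{a_2}L_2\otimes\wedge^{a_3}L_3\otimes\cdots$, which are direct sums of $\mathcal{O}(-w)$ with $w=\sum ia_i$ and $\sum a_i=k$, together with terms involving $K'$ in high homological degree. Equivalently, work with the derived $\wedge^k$ of the truncated complex. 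A term of homological degree $h=w-k$ is a sum of $\mathcal{O}(-w)$ with $w=k+h\le k(p)$-ish, and contributes to $H^j(\wedge^kN)$ through $H^{j+h}(\mathcal{O}(-k-h))$. This is nonzero only if $j+h=n$ and $k+h\ge n+1$. With $k=j+1$ we have $k+h=n+1$, so the dangerous term is exactly $\mathcal{O}(-n-1)$ in homological degree $h=n-j$. For the pure line-bundle part this weight is attained only if $n+1\le kp'$ for parts up to $p$, roughly $h\le (p-1)k$. Hence $n-j\le (p-1)(j+1)$, i.e.\ $n+1\le p(j+1)$. The tail involving $K'$ requires homological degree $\ge p$-ish contributions and must be handled separately.

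The main obstacle is making this bookkeeping exact: showing that the only terms which can hit $H^n(\mathcal{O}(-n-1))$ live in homological degree beyond what the linear part supplies precisely when $j+1\le\lceil n/p\rceil$. This requires $p(j+1)\le n$, i.e.\ $j+1\le t$ forces $j<n/p$. I also need the terms involving $K'$ to sit in homological degree $>n-j$ so that they cannot contribute. I would first try to handle the $K'$ terms by noting that any Schur-complex term containing $K'$ has homological degree at least $p+1+(k-1)$ or so, and then track weights carefully. If this fails, I would instead replace $N$ by the $p$-th syzygy bundle and use a filtration argument.
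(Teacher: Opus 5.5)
Your reduction step is mis-indexed, and once corrected it asks for something false. In the exact Koszul complex the term $\wedge^kN\otimes\Sym^{t-k}V$ sits $k+1$ places to the left of $\mathcal{O}(td)$. The chase therefore gives surjectivity of $\Sym^tV\to H^0(\mathcal{O}(td))$ from $H^k(\wedge^kN)=0$ for $1\le k\le t$, not from $H^j(\wedge^{j+1}N)=0$. For $t=1$ your condition is empty, yet $V\to H^0(\mathcal{O}(d))$ is onto only when $I=\m^d$.

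The corrected $k=1$ condition is $H^1(N)=0$. But $0\to N\to V\otimes\mathcal{O}\to\mathcal{O}(d)\to0$ gives $H^1(N)\cong S_d/I_d$, which is nonzero whenever $I\neq\m^d$, i.e.\ in every case where the theorem has content. So no term-by-term vanishing on the Koszul complex of the sheaf $N$ can work.

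Your Schur-complex bookkeeping says the same thing once the index is fixed. With $k=j$, a line-bundle term $\mathcal{O}(-k-h)$ in homological degree $h$ would need $j+h=n$ and $k+h\ge n+1$, which is impossible. So all the trouble is in the $K'$-terms, and these do not sit in high homological degree as you hoped: $\wedge^{k-1}L_1\otimes K'$ sits in degree $p$, and for $k=1$ one gets exactly $H^1(N)\cong H^{p+1}(K')\neq0$. Your numerology also runs the wrong way. You end up needing $p(j+1)\le n$ for $j$ up to $t-1$, an upper bound on $t$, whereas the hypothesis $t\ge\lceil n/p\rceil$ is a lower bound.

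The missing idea is to keep the truncated linear strand as a complex instead of collapsing it to $N$. Let $\mathcal{A}^\bullet=[V_p(-p)\to\cdots\to V_1(-1)\to V_0\otimes\mathcal{O}]$ and $\mathcal{B}^\bullet=\ker(\mathcal{A}^\bullet\to\mathcal{O}(d))$. Then $\mathcal{B}^\bullet$ has $N$ in degree $0$ and is quasi-isomorphic to $K'[p]$. In characteristic zero every $\mathbb{S}_\lambda(\mathcal{B}^\bullet)$ with $|\lambda|=t$ is a direct summand of $(\mathcal{B}^\bullet)^{\otimes t}\simeq (K')^{\otimes t}[tp]$. Hence its $\mathbb{H}^j$ vanishes for $j\ge1$ once $tp\ge n$; this is where the hypothesis enters.

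The paper combines this with two further ingredients. One is the hook-Schur sequences $0\to\mathbb{S}_{(t-i,1^i)}\mathcal{B}^\bullet\to\mathbb{S}_{(t-i,1^i)}\mathcal{A}^\bullet\to\Sym^{t-i-1}\mathcal{A}^\bullet\otimes\wedge^i\mathcal{B}^\bullet\otimes\mathcal{O}(d)\to0$. The other is that the linear complexes $\mathbb{S}_\lambda\mathcal{A}^\bullet$ have no positive hypercohomology. Together these give $\mathbb{H}^{\ge1}=0$ for the mixed complexes $\Sym^{t-i-1}\mathcal{A}^\bullet\otimes\wedge^i\mathcal{B}^\bullet\otimes\mathcal{O}(d)$.

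These mixed complexes are the rows of an exact Koszul-type double complex ending in $\Sym^{t-1}\mathcal{A}^\bullet\otimes\mathcal{O}(d)\to\mathcal{O}(td)$. A spectral-sequence argument then gives $(S/I^t)_{td}=0$, using the identification $(S/I^t)_{td}\cong\mathbb{H}^0(\operatorname{cone}(\Sym^{t-1}\mathcal{A}^\bullet\otimes\mathcal{O}(d)\to\mathcal{O}(td)))$ from the author's earlier work. Your Koszul complex of sheaves sees only the degree-$0$ pieces $V\otimes\mathcal{O}$ and $N$ of these complexes. Those pieces have nonvanishing intermediate cohomology, starting with $H^1(N)\cong S_d/I_d$; the needed vanishing holds only for the full complexes.
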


\section{Preliminaries}
\subsection{Notation and Conventions}
For a coherent sheaf $\mc{F}$ on $\PP^n$, we denote its $i$-th sheaf cohomology group by $H^i(\mc{F}) = H^i(\PP^n,\mc{F})$. All complexes will be indexed cohomologically. For a complex $\mc{C}^\bullet$ of coherent sheaves on $\PP^n$, the $i$-th hypercohomology is denoted by $\mathbb{H}^i(\mc{C}^\bullet)$.
\subsection{Spectral sequences}
For details on spectral sequences, we refer the reader to \cite{Weibel}*{Section~5.7}. Let $\mc{D}^{\bullet,\bullet}$ be a bounded double complex of coherent sheaves on $\PP^n$. There is a spectral sequence $E$ associated to $\mc{D}^{\bullet,\bullet}$ such that
\[E_1^{k,j} = \bb{H}^j(\mc{D}^{k,\bullet}) \implies \bb{H}^{k+j}(\operatorname{Tot}(\mc{D}^{\bullet,\bullet})) \quad \text{with differential }d_r \colon E_r^{k,j} \lra E_r^{k+r,j-r+1}.\]
\begin{lemma}\label{lem:linearComplex}
    Let $\mc{C}^\bullet$ be a bounded complex of coherent sheaves on $\PP^n$ such that
    \begin{enumerate}
        \item $\mc{C}^\bullet$ is concentrated in nonpositive cohomological degrees,
        \item $\mc{C}^{k}$ is a direct sum of $\mc{O}_{\PP^n}(k)$.
    \end{enumerate}
    Then $\bb{H}^i(\mc{C}^\bullet) = 0$ for all $i \ge 1$.
\end{lemma}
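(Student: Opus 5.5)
We compute $\bb{H}^i(\mc{C}^\bullet)$ with the hypercohomology spectral sequence, viewing $\mc{C}^\bullet$ as a double complex with a single nonzero row. Set $\mc{D}^{k,0} = \mc{C}^k$ and $\mc{D}^{k,j} = 0$ for $j \neq 0$. Then $\operatorname{Tot}(\mc{D}^{\bullet,\bullet}) = \mc{C}^\bullet$, and the spectral sequence from the preliminaries has
\[E_1^{k,j} = H^j(\mc{C}^k) \implies \bb{H}^{k+j}(\mc{C}^\bullet).\]
Here $\bb{H}^j(\mc{D}^{k,\bullet})$ is the $j$-th cohomology of a single sheaf placed in degree $0$, which is just $H^j(\mc{C}^k)$. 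This is the standard first-quadrant-type spectral sequence for hypercohomology. Since $\mc{C}^\bullet$ is bounded, it converges. So it suffices to show $E_1^{k,j} = 0$ whenever $k + j \ge 1$: then every term $E_\infty^{k,j}$ on the antidiagonal $k+j = i \ge 1$ vanishes, and the filtration on $\bb{H}^i(\mc{C}^\bullet)$ has all graded pieces zero.

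To check the vanishing, take $k, j$ with $k+j \ge 1$. By hypothesis (1), $\mc{C}^k = 0$ unless $k \le 0$, so we may assume $k \le 0$. Then $j \ge 1 - k \ge 1$.

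By hypothesis (2), $\mc{C}^k$ is a direct sum of copies of $\mc{O}_{\PP^n}(k)$. By Serre's computation of the cohomology of line bundles on projective space, $H^j(\PP^n, \mc{O}_{\PP^n}(k))$ vanishes for $0 < j < n$ for all $k$. For $j = n$, it is nonzero only when $k \le -n-1$. In that case $k + j = k + n \le -1 < 1$, which is outside the range we care about.

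So for $k \le 0$ and $j \ge 1$ with $k + j \ge 1$:
\begin{enumerate}
    \item either $1 \le j \le n-1$, and the group vanishes by the first part of Serre's computation;
    \item or $j = n$, and then $k \ge 1 - n > -n-1$, so $H^n(\mc{O}_{\PP^n}(k)) = 0$;
    \item or $j > n$, and the group vanishes for dimension reasons.
\end{enumerate}
Hence $E_1^{k,j} = 0$ for all $k+j \ge 1$, and therefore $\bb{H}^i(\mc{C}^\bullet) = 0$ for $i \ge 1$.

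The only point needing care is the index convention: we must use the filtration by the complex degree $k$, so that the $E_1$ page is the sheaf cohomology of the individual terms. The rest is bookkeeping with Serre's theorem, and there is no real obstacle.
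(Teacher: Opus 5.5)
Your proposal is correct and follows essentially the same route as the paper: the spectral sequence with $E_1^{k,j}=H^j(\mc{C}^k)$, reduced to the vanishing of $H^j(\mc{O}_{\PP^n}(k))$ for $k\le 0$ and $k+j\ge 1$ via Serre's computation (Hartshorne III.5.1). The only difference is that you write out explicitly the case analysis the paper leaves implicit.
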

\begin{proof}
    Consider the spectral sequence $E$ on $\mc{C}^\bullet$ by viewing each term $\mc{C}^k$ as a complex concentrated in cohomological degree $0$. To prove $\bb{H}^i(\mc{C}^\bullet) = 0$, it suffices to show that 
    \begin{equation}\label{eq:vanishingE1}
        E_1^{k,i-k} = 0 \quad \text{for all }i \ge 1 \text{ and }k \le 0.
    \end{equation}
    Since $E_1^{k,i-k} = H^{i-k}(\mc{C}^k)$ and $\mc{C}^{k}$ is a direct sum of $\mc{O}_{\PP^n}(k)$, the desired vanishing (\ref{eq:vanishingE1}) follows from \cite{Hartshorne}*{Theorem~III.5.1}.
\end{proof}
\subsection{Schur complexes} For details on Schur complexes, we refer the reader to \cite{weyman}*{Chapter~2}.  Let
\[0 \lra \mc{B}^\bullet \lra \mc{A}^\bullet \lra L \lra 0\]
be a short exact sequence of bounded complexes of vector bundles on $\PP^n$, where $L$ is a line bundle concentrated in cohomological degree $0$. Since $L$ has rank $1$, for every positive integer $r$, there are two natural short exact sequences given by
\begin{equation}\label{eq:symses}
    0 \lra \Sym^r(\mc{B}^\bullet) \lra \Sym^r(\mc{A}^\bullet) \lra \Sym^{r-1}(\mc{A}^\bullet) \otimes L \lra 0,
\end{equation}
\begin{equation}\label{eq:wedgeses}
    0 \lra \bigwedge^r(\mc{B}^\bullet) \lra \bigwedge^r(\mc{A}^\bullet) \lra \bigwedge^{r-1}(\mc{B}^\bullet) \otimes L \lra 0.
\end{equation}
In fact, this construction can be generalized to all hook Schur functors. We assume that $L$ is placed at cohomological degree $1$.
\begin{lemma}\label{lem:SEShooks}
    For all integer $i,r$ such that $0 \le i \le r-1$, there is a short exact sequence
    \[0 \lra \bb{S}_{(r-i,1^i)}(\mc{B}^\bullet) \lra \bb{S}_{(r-i,1^i)}(\mc{A}^\bullet) \lra \Sym^{r-i-1}(\mc{A}^\bullet) \otimes \bigwedge^{i}(\mc{B}^\bullet) \otimes L \lra 0.\]
    In particular, when $i = 0$, this recovers (\ref{eq:symses}), and when $i = r-1$, this recovers (\ref{eq:wedgeses}).
\end{lemma}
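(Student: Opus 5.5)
Since $L$ sits in cohomological degree $1$ it behaves as an odd element, so the plan is to compute everything in the graded-commutative (super) setting. There $\Sym$ of a complex means the symmetric algebra in the super sense, and the hook Schur functors are defined by the standard (Weyman) presentation. Concretely, I would realize $\bb{S}_{(r-i,1^i)}(\mc{A}^\bullet)$ as the kernel of the multiplication map
\[\Sym^{r-i}(\mc{A}^\bullet)\otimes \bigwedge^{i}(\mc{A}^\bullet) \lra \Sym^{r-i+1}(\mc{A}^\bullet)\otimes \bigwedge^{i-1}(\mc{A}^\bullet),\]
which is the Koszul-type differential in characteristic zero. Equivalently, it is the image of the map in the opposite direction, or the cokernel of the analogous map with indices shifted. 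The construction is functorial and compatible with the Koszul signs, so applying it to the map $\mc{B}^\bullet\to\mc{A}^\bullet$ gives a natural map of complexes $\bb{S}_{(r-i,1^i)}(\mc{B}^\bullet)\to \bb{S}_{(r-i,1^i)}(\mc{A}^\bullet)$.

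The key steps are as follows.

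\textbf{Local splitting.} Exactness can be checked termwise and locally, so I would first work on an open set where $L$ is trivialized and the sequence of vector bundles splits degreewise: $\mc{A}=\mc{B}\oplus L$ as graded objects, with $L$ odd. The differential does not split, but exactness of sheaves in each cohomological degree only concerns the graded modules. Compatibility with the differentials is then automatic, because the maps come from functoriality and from the quotient $\mc{A}\to L$.

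\textbf{Decomposition.} In the super world, adding an odd line $\ell$ gives the analogue of the classical branching rule, with the roles of $\Sym$ and $\bigwedge$ swapped:
\[\Sym^k(\mc{B}\oplus\ell)=\Sym^k\mc{B}\oplus \Sym^{k-1}\mc{B}\otimes\ell, \qquad \bigwedge^k(\mc{B}\oplus\ell)=\bigoplus_{j\ge0}\bigwedge^{k-j}\mc{B}\otimes \ell^{j}.\]
For the hook $(r-i,1^i)$, the super branching rule says that $\bb{S}_\lambda(\mc{B}\oplus\ell)$ is the sum of $\bb{S}_\mu(\mc{B})\otimes \ell^{|\lambda/\mu|}$ over all $\mu\subseteq\lambda$ with $\lambda/\mu$ a vertical strip. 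For a hook, this yields $\bb{S}_{(r-i,1^i)}\mc{B}$, together with terms involving $\ell$, and these $\ell$-terms should reassemble into the quotient. I would verify that the quotient is $\Sym^{r-i-1}(\mc{A})\otimes\bigwedge^i(\mc{B})\otimes L$. Its $\ell$-decomposition is
\[\Sym^{r-i-1}\mc{A}=\Sym^{r-i-1}\mc{B}\oplus\Sym^{r-i-2}\mc{B}\otimes\ell,\]
and the product $\Sym^{a}\mc{B}\otimes\bigwedge^{i}\mc{B}$ decomposes by Pieri into two hooks, $(a+1,1^{i-1})$ and $(a,1^i)$. Rather than matching pieces by hand, I would \emph{define} the map concretely as follows. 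Take an element of the hook inside $\Sym^{r-i}\mc{A}\otimes\bigwedge^i\mc{A}$, project the $\bigwedge^i$ factor to $\bigwedge^i$ modulo $L$-terms, and contract one symmetric factor via $\mc{A}\to L$. One then checks that this is surjective, with kernel the $\mc{B}$-hook, by a rank count using the dimension formulas for super hooks.

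\textbf{Recovering the special cases.} For $i=0$ the construction gives (\ref{eq:symses}), and for $i=r-1$ it gives (\ref{eq:wedgeses}). Both have their roles of $\Sym$ and $\bigwedge$ exchanged relative to the even case, which is consistent with the fact that $L$ is odd.

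\textbf{Main obstacle.} I expect the hardest step to be exhibiting the surjection intrinsically and showing that its kernel is exactly $\bb{S}_{(r-i,1^i)}(\mc{B}^\bullet)$. My first attempt would use a filtration of $\bb{S}_\lambda(\mc{A})$ by the "number of $L$-factors": the standard filtration of Schur functors of an extension (Akin--Buchsbaum--Weyman), whose associated graded pieces are $\bb{S}_\mu(\mc{B})\otimes \bb{S}_{\lambda/\mu}(L)$. Since $L$ is an odd line, $\bb{S}_{\lambda/\mu}(L)\ne0$ only for vertical strips. I would then check that the filtration steps beyond the first collapse to the single stated quotient, using the rank-one exactness of $\Sym^{\bullet}(\mc{A})\otimes\bigwedge^{\bullet}(\mc{B})\otimes L$.
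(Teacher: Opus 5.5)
There is a genuine gap, and it starts with the parity of $L$. In the paper $L$ is \emph{even}: it is a line bundle concentrated in cohomological degree $0$ of the complexes (in the application, $L=\mc{O}_{\PP^n}(d)=\mc{H}^0(\mc{A}^\bullet)$). The sentence ``$L$ is placed at cohomological degree $1$'' refers only to the auxiliary two-term complex $\mc{A}^\bullet\lra L$ whose Schur complex is used in the proof, not to the grading inside $\mc{A}^\bullet$. If $L$ were odd, the lemma would be false, and your own branching formulas show it. For $i=0$ they give $\Sym^r(\mc{B}\oplus\ell)/\Sym^r(\mc{B})\cong\Sym^{r-1}(\mc{B})\otimes\ell$. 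For $r\ge 2$ this is strictly smaller than $\Sym^{r-1}(\mc{A})\otimes\ell\cong\Sym^{r-1}(\mc{B})\otimes\ell\oplus\Sym^{r-2}(\mc{B})\otimes\ell^{\otimes 2}$. Likewise $\bigwedge^r(\mc{B}\oplus\ell)/\bigwedge^r(\mc{B})\cong\bigwedge^{r-1}(\mc{A})\otimes\ell$, not $\bigwedge^{r-1}(\mc{B})\otimes\ell$. So (\ref{eq:symses}) and (\ref{eq:wedgeses}) are exactly the \emph{even} identities, not ``exchanged'' ones, and your planned rank count with super hook dimensions and vertical strips cannot close. With $L$ even the branching is by horizontal strips, and then the graded pieces do match Pieri's rule applied to $\Sym^{r-i-1}(\mc{A})\otimes\bigwedge^i(\mc{B})\otimes L$.

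Even with the parity corrected, the map you propose is not intrinsic. $\bigwedge^i(\mc{B}^\bullet)$ is naturally a \emph{subobject} of $\bigwedge^i(\mc{A}^\bullet)$, not a quotient. So ``projecting the $\bigwedge^i$ factor modulo $L$-terms'' uses a local, non-unique splitting $\mc{A}=\mc{B}\oplus L$, which in general need not respect the differentials. Such maps do not glue to a morphism of complexes, and a comparison of associated graded pieces (e.g.\ via the Akin--Buchsbaum--Weyman filtration) does not produce one either.

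The missing idea is the paper's. Apply $\bb{S}_\lambda$, with $\lambda=(r-i,1^i)$, to the surjection $\mc{A}^\bullet\lra L$ to obtain Weyman's Schur complex (Theorem~2.4.10 of his book). It is exact except for $\bb{S}_\lambda(\mc{B}^\bullet)$ in degree $0$. Since $\rk L=1$, only $\mu=(1^k)$ contribute, giving an exact sequence
\begin{multline*}
0\lra\bb{S}_\lambda(\mc{B}^\bullet)\lra\bb{S}_\lambda(\mc{A}^\bullet)\lra\Sym^{r-i-1}(\mc{A}^\bullet)\otimes\bigwedge^{i}(\mc{A}^\bullet)\otimes L\\
\overset{\partial}{\lra}\Sym^{r-i-1}(\mc{A}^\bullet)\otimes\bigwedge^{i-1}(\mc{A}^\bullet)\otimes L^{2}.
\end{multline*}
Here $\partial$ is the identity on $\Sym^{r-i-1}(\mc{A}^\bullet)\otimes L$ tensored with the Koszul differential $\bigwedge^i(\mc{A}^\bullet)\lra\bigwedge^{i-1}(\mc{A}^\bullet)\otimes L$. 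Hence $\ker\partial=\Sym^{r-i-1}(\mc{A}^\bullet)\otimes\bigwedge^{i}(\mc{B}^\bullet)\otimes L$ is the image of $\bb{S}_\lambda(\mc{A}^\bullet)$. This yields the surjection canonically: contract the corner box into $L$, and the image automatically lies in this subobject, so no projection is needed. That is precisely the step your ``main obstacle'' paragraph leaves open.
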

\begin{proof}
    Write $\lambda = (r-i,1^i)$. For any partition $\mu$, we denote its tranpose by $\mu^t$. Apply the functor $\bb{S}_{\lambda}(-)$ to the surjection $\mc{A}^\bullet \lra L$, by \cite{weyman}*{Theorem~2.4.10}, we get a double complex $\mc{C}^{\bullet,\bullet}$ with
    \[\mc{C}^{k,\bullet} = \bigoplus_{\substack{\mu \subseteq \lambda\\|\mu| = k}} \bb{S}_{\lambda/\mu}(\mc{A}^\bullet) \otimes \bb{S}_{\mu^t}(L) \overset{\rk(L)=1}{=} \begin{cases}
        \bb{S}_{\lambda}(\mc{A}^\bullet) &k=0,\\
        \Sym^{r-i-1}(\mc{A}^\bullet) \otimes \bigwedge^{i+1-k}(\mc{A}^\bullet) \otimes L^{k} &1 \le k \le i+1.
    \end{cases}\]
    Each of the complex $\mc{C}^{\bullet,j}$ only has nonzero cohomology at degree $0$, and it is given by $(\bb{S}_\lambda(\mc{B}^\bullet))^j$. Hence, it suffices to show that
    \[\Sym^{r-i-1}(\mc{A}^\bullet) \otimes \bigwedge^{i}(\mc{B}^\bullet) \otimes L = \ker(\Sym^{r-i-1}(\mc{A}^\bullet) \otimes \bigwedge^{i}(\mc{A}^\bullet) \otimes L \overset{\partial}{\lra} \Sym^{r-i-1}(\mc{A}^\bullet) \otimes \bigwedge^{i-1}(\mc{A}^\bullet) \otimes L^2).\]
    Indeed, the map $\partial$ is the tensor product of $\Sym^{r-i-1}(\mc{A}^\bullet) \otimes L$ with the usual Koszul differential
    \[\bigwedge^i (\mc{A}^\bullet) \lra \bigwedge^{i-1}(\mc{A}^\bullet) \otimes L,\]
    which has kernel $\bigwedge^i (\mc{B}^\bullet)$. This concludes the proof.
\end{proof}

\section{Proof of the Eisenbud--Huneke--Ulrich conjecture}
Let $I$ be an $\m$-primary ideal generated in degree $d$. Assume the resolution of $I$ is virtually linear for $p$ steps. Write
\[V_i = \Tor_i^S(I,\CC)_{i+d} \quad \text{for all }0 \le i \le p.\]
It follows from the virtual linearity that there is a complex
\[\mc{A}^\bullet \colon 0 \lra V_p(-p) \lra \cdots \lra V_1(-1) \lra V_0 \otimes \mc{O}_{\PP^n}\lra 0\]
such that the cohomology sheaves are given by
\[\mc{H}^{i}(\mc{A}^\bullet)=\begin{cases}
    \mc{O}_{\PP^n}(d) &i = 0,\\
    E_p &i=-p,\\
    0 &\text{otherwise,}
\end{cases}\]
for some vector bundle $E_p$. Set $\mc{B}^\bullet \coloneqq \ker(\mc{A}^\bullet \lra \mc{O}_{\PP^n}(d))$. The cohomology sheaves of $\mc{B}^\bullet$ are
\[\mc{H}^{i}(\mc{B}^\bullet)=\begin{cases}
    E_p &i=-p,\\
    0 &\text{otherwise.}
\end{cases}\]
In particular, we have a short exact sequence
\begin{equation}\label{eq:BALses}
    0 \lra \mc{B}^\bullet \lra \mc{A}^\bullet \lra \mc{O}_{\PP^n}(d) \lra 0.
\end{equation}
\begin{lemma}\label{lem:hypercohomologyVanishingQuotientSchur}
    For all $t\ge \left\lceil \frac{n}{p}\right\rceil$ and $0 \le i \le t-1$, we have
    \[\bb{H}^j\left( \Sym^{t-i-1}(\mc{A}^\bullet) \otimes \bigwedge^i(\mc{B}^\bullet) \otimes \mc{O}_{\PP^n}(d)\right) = 0 \quad \text{for all }j \ge 1.\]
\end{lemma}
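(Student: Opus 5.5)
The plan is to replace $\bigwedge^i(\mc{B}^\bullet)$ by a single vector bundle placed in one degree, and then run the hypercohomology spectral sequence exactly as in Lemma~\ref{lem:linearComplex}.

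\textbf{Step 1: replace $\bigwedge^i(\mc{B}^\bullet)$ by a shifted bundle.} Since $\mc{H}^\bullet(\mc{B}^\bullet)$ is concentrated in degree $-p$ with value $E_p$, and $E_p=\ker(\mc{B}^{-p}\to\mc{B}^{-p+1})$ is a subbundle, the inclusion $E_p[p]\hookrightarrow\mc{B}^\bullet$ is a quasi-isomorphism of bounded complexes of vector bundles. In characteristic zero, Schur functors preserve quasi-isomorphisms between such complexes. With the sign conventions of Schur complexes, $\bigwedge^i(E_p[p])$ is $F_i[ip]$, where $F_i=\bigwedge^i E_p$ or $\Sym^i E_p$ according to the parity of $p$; in either case $F_i$ is a direct summand of $E_p^{\otimes i}$. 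Tensoring with the bounded complex of bundles $\Sym^{t-i-1}(\mc{A}^\bullet)\otimes\mc{O}_{\PP^n}(d)$ preserves the quasi-isomorphism. So it suffices to show
\[
\bb{H}^j\bigl(\Sym^{s}(\mc{A}^\bullet)\otimes F_i(d)[ip]\bigr)=0 \quad (j\ge1),\qquad s=t-i-1 .
\]

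\textbf{Step 2: reduce to sheaf cohomology of twists of $F_i$.} As in Lemma~\ref{lem:linearComplex}, the $k$-th term of $\Sym^s(\mc{A}^\bullet)$ is a direct sum of copies of $\mc{O}_{\PP^n}(k)$, for $-ps\le k\le 0$. The $E_1$-page therefore has entries $H^{j'}(F_i(d+k))$ contributing to total degree $k-ip+j'$. It suffices to show
\[
H^{j'}(F_i(d+k))=0 \quad\text{whenever } j'\ge ip+1-k,\ -ps\le k\le 0 .
\]
If $ip\ge n$ this is automatic, because $j'>n$. For $ip<n$, i.e.\ $i<\lceil n/p\rceil\le t$, I need genuine information about $E_p$.

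\textbf{Step 3: compute the cohomology of $E_p^{\otimes i}$.} The exact sequence
\[
0\to E_p\to V_p(-p)\to\cdots\to V_0\otimes\mc{O}_{\PP^n}\to\mc{O}_{\PP^n}(d)\to0
\]
shows that $E_p[p]$ is quasi-isomorphic to the cone-type complex $\mc{B}^\bullet$. Equivalently, $E_p$ is resolved to the right by sums of $\mc{O}(-q)$, $0\le q\le p$, followed by $\mc{O}(d)$ in position $p+1$. Tensoring $i$ copies of this resolution resolves $E_p^{\otimes i}$ by a complex. Its terms in position $\ell$ are sums of $\mc{O}(-q+ed)$, with $e$ the number of factors that have reached the $\mc{O}(d)$ term and $q+e(p+1)\ge\ell$ suitably bounded. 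Chasing this with Bott vanishing on $\PP^n$ (only $H^0$ and $H^n$ can be nonzero) gives a bound of the form: $H^{j'}(E_p^{\otimes i}(m))$ can be nonzero only when $j'=n-\ell$ for some $\ell$ with $m-q+ed\le -n-1$, or when $j'$ is a multiple of $p+1$ arising from $H^0$ of the $\mc{O}(ed)$ terms. I would then check directly that these allowed $j'$ all satisfy $j'<ip+1-k$ in the range $-ps\le k\le 0$, using $ps+ip\ge (t-1)p$ and $tp\ge n$. Passing to the summand $F_i$ then finishes.

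\textbf{Main obstacle.} The hard part is the bookkeeping in Step 3, namely matching the twists $d+k$ with $k\ge -p(t-i-1)$ against the negative twists needed for $H^n$ to be nonzero. This is where the hypothesis $t\ge\lceil n/p\rceil$ must enter sharply. If the direct bound turns out too weak, my fallback is to filter $\Sym^s(\mc{A}^\bullet)$ through the sequences~(\ref{eq:symses}), i.e.\ Lemma~\ref{lem:SEShooks} with $i=0$. This replaces the $\mc{A}$-factors by $\mc{B}$-factors at the cost of extra $\mc{O}(d)$ twists, so that every piece becomes a Schur power of $E_p$ shifted by $(a+i)p$, which helps whenever $a+i\ge \lceil n/p\rceil$. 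I would then run an induction on $i$ interlacing the two descriptions.
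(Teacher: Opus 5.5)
\textbf{There is a genuine gap.} Your Step 1 is fine, but Steps 2--3 cannot be completed, because the $E_1$-page vanishing you reduce to is false in general.

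Take $i=1$, so $F_1=E_p$ and $s=t-2$, and take $k=0$. The term $\Sym^{t-2}(V_0)\otimes E_p(d)$ sits in degree $-p$ of your complex, so its $H^{p+1}$ lands in total degree $1$. Suppose $p<n$. Dimension shifting along $0\to E_p\to V_p(-p)\to\cdots\to V_0\otimes\mc{O}_{\PP^n}\to\mc{O}_{\PP^n}(d)\to 0$ uses only the vanishing of intermediate cohomology of line bundles, and gives
\[H^{p+1}(E_p(d))\cong H^1(\mc{B}^0(d))\cong (S/I)_{2d},\]
where $\mc{B}^0=\ker(V_0\otimes\mc{O}_{\PP^n}\to\mc{O}_{\PP^n}(d))$. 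This group need not vanish. For example, $I=(x_i^3,\,x_i^2x_j : i\neq j)$ in $\CC[x_0,\dots,x_5]$ is $\m$-primary and linearly presented: any two generators are joined by a chain of generators dividing their lcm, with consecutive ones differing in one variable. So $n=5$, $d=3$, $p=1$, $t\ge 5$, yet $x_0x_1\cdots x_5\notin I$.

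These classes are exactly the ``$H^0$ of the $\mc{O}(ed)$ terms'' on your own list. With $e=i$ and $k=0$ they occur at $j'=i(p+1)\ge ip+1-k$, so the inequality you planned to verify fails. The lemma is still true, but only because such classes are cancelled by differentials, which an $E_1$ count cannot see.

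\textbf{The fallback does not repair this.} Filtering $\Sym^{s}(\mc{A}^\bullet)$ via (\ref{eq:symses}) produces pieces $\Sym^a(\mc{B}^\bullet)\otimes\bigwedge^i(\mc{B}^\bullet)\otimes\mc{O}_{\PP^n}((s-a+1)d)$. Each contains only $a+i\le t-1$ factors of $\mc{B}^\bullet$, so when $t=\lceil n/p\rceil$ none of them has shift $\ge n$. Worse, for $i=1$ and $a=0$ the piece $\mc{B}^\bullet((t-1)d)$ has $\bb{H}^1\cong H^{p+1}(E_p((t-1)d))\cong (S/I)_{td}$. That is precisely what the theorem is trying to prove vanishes, so the induction would be circular.

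\textbf{The missing idea is to go up instead of down.} By Lemma~\ref{lem:SEShooks} with $r=t$, your complex is the cokernel of $\bb{S}_{(t-i,1^i)}(\mc{B}^\bullet)\hookrightarrow\bb{S}_{(t-i,1^i)}(\mc{A}^\bullet)$. In this way the twist $\mc{O}_{\PP^n}(d)$ is absorbed as a $t$-th tensor factor.
\begin{itemize}
\item The middle complex is linear, so Lemma~\ref{lem:linearComplex} kills its $\bb{H}^{\ge 1}$.
\item The subcomplex is a summand of $(\mc{B}^\bullet)^{\otimes t}\simeq E_p^{\otimes t}[tp]$. Hence its $\bb{H}^{j+1}=H^{tp+j+1}(E_p^{\otimes t})$ vanishes for $j\ge 0$ by Grothendieck vanishing, since $tp\ge n$.
\end{itemize}
This full count of $t$ factors is where the hypothesis $t\ge\lceil n/p\rceil$ enters. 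Your reduction only ever sees $i$ of them, or at most $t-1$ in the fallback.
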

\begin{proof}
    Apply Lemma~\ref{lem:SEShooks} to (\ref{eq:BALses}), there is a short exact sequence
    \[0 \lra \bb{S}_{(t-i,1^i)}(\mc{B}^\bullet) \lra \bb{S}_{(t-i,1^i)}(\mc{A}^\bullet) \lra \Sym^{t-i-1}(\mc{A}^\bullet) \otimes \bigwedge^{i}(\mc{B}^\bullet) \otimes \mc{O}_{\PP^n}(d) \lra 0.\]
    It follows from the induced long exact sequence in hypercohomology that it suffices to show 
    \[\bb{H}^j(\bb{S}_{(t-i,1^i)}(\mc{A}^\bullet)) = \bb{H}^{j+1}(\bb{S}_{(t-i,1^i)}(\mc{B}^\bullet)) = 0\quad \text{for all }j \ge 1.\]
    Note that $\bb{S}_{(t-i,1^i)}(\mc{A}^\bullet)$ is a complex concentrated in nonpositive cohomological degrees, where the term $(\bb{S}_{(t-i,1^i)}(\mc{A}^\bullet))^k$ is a direct sum of $\mc{O}_{\PP^n}(k)$ for all $k \le 0$. By Lemma~\ref{lem:linearComplex}, $\bb{H}^j(\bb{S}_{(t-i,1^i)}(\mc{A}^\bullet)) = 0$ for all $j \ge 1$. Since the complex $\bb{S}_{(t-i,1^i)}(\mc{B}^\bullet)$ is a direct summand of $(\mc{B}^\bullet)^{\otimes t}$, there is an injection
    \[\bb{H}^{j+1}(\bb{S}_{(t-i,1^i)}(\mc{B}^\bullet)) \lra \bb{H}^{j+1}((\mc{B}^\bullet)^{\otimes t}).\]
    Since $\mc{B}^\bullet$ has nonzero cohomology sheaf only in degree $-p$, $(\mc{B}^\bullet)^{\otimes t}$ has nonzero cohomology sheaf only in degree $-tp$, and it is given by $E_p^{\otimes t}$. Hence, the hypercohomology of the complex $(\mc{B}^\bullet)^{\otimes t}$ can be computed by
    \begin{equation}\label{eq:hypercohomologyTensorB}
        \bb{H}^{j+1}((\mc{B}^\bullet)^{\otimes t}) = H^{tp+j+1}(E_p^{\otimes t}).
    \end{equation}
    It follows from the Grothendieck vanishing and the ineqality $tp+j+1 > n$ that (\ref{eq:hypercohomologyTensorB}) vanishes. Hence, $\bb{H}^{j+1}(\bb{S}_{(t-i,1^i)}(\mc{B}^\bullet)) = 0$ for all $j \ge 0$. This concludes the proof.
\end{proof}
The $(r-1)$-st symmetric product of the natural inclusion $\mc{B}^\bullet \lra \mc{A}^\bullet$ gives an acyclic complex with cohomology $\mc{O}_{\PP^n}((r-1)d)$. Taking the associated mapping cone and tensoring with $\mc{O}_{\PP^n}(d)$, we get an exact complex
\begin{align*}
    0 \lra \bigwedge^{r-1}(\mc{B}^\bullet) \otimes \mc{O}_{\PP^n}(d) \lra \cdots \lra \Sym^{r-1-i}(\mc{A}^\bullet) \otimes \bigwedge^i(\mc{B}^\bullet) \otimes \mc{O}_{\PP^n}(d) \lra \cdots \lra\\
    \Sym^{r-2}(\mc{A}^\bullet) \otimes \mc{B}^{\bullet}\otimes \mc{O}_{\PP^n}(d) \lra \Sym^{r-1}(\mc{A}^\bullet)\otimes \mc{O}_{\PP^n}(d) \lra \mc{O}_{\PP^n}(rd)\lra 0.
\end{align*}
We denote this by $\mc{D}^{\bullet,\bullet}_r$, where we have
\[\mc{D}_r^{0,\bullet} = \mc{O}_{\PP^n}(rd)\quad \text{and} \quad \mc{D}_r^{-i,\bullet} = \Sym^{r-i}(\mc{A}^\bullet) \otimes \bigwedge^{i-1}(\mc{B}^\bullet) \otimes \mc{O}_{\PP^n}(d) \quad \text{for all }1 \le i \le r.\]
\begin{proof}[Proof of Theorem~\ref{thm:EHU}]
    We show that $(S/I^t)_{td} = 0$ for all $t\ge \left\lceil \frac{n}{p}\right\rceil$. We first prove
    \begin{equation}\label{eq:hilbertfunctioncoker}
        (S/I^t)_{td} = \coker(\bb{H}^0(\mc{D}^{-1,\bullet}) \lra \bb{H}^0(\mc{D}^{0,\bullet})).
    \end{equation}
    By Lemma~\ref{lem:hypercohomologyVanishingQuotientSchur}, we have $\bb{H}^1(\mc{D}^{-1,\bullet}) = \bb{H}^1(\Sym^{t-1}(\mc{A}^\bullet) \otimes \mc{O}_{\PP^n}(d)) = 0$ which implies that
    \begin{align*}
        \coker(\bb{H}^0(\mc{D}^{-1,\bullet}) \lra \bb{H}^0(\mc{D}^{0,\bullet})) &= \ker(\bb{H}^0(\operatorname{cone}(\mc{D}^{-1,\bullet} \lra \mc{D}^{0,\bullet})) \lra \bb{H}^1(\mc{D}^{-1,\bullet}))\\
        &= \bb{H}^0(\operatorname{cone}(\mc{D}^{-1,\bullet} \lra \mc{D}^{0,\bullet}))\\
        &= \bb{H}^0(\operatorname{cone}(\Sym^{t-1}(\mc{A}^\bullet) \otimes \mc{O}_{\PP^n}(d) \lra \mc{O}_{\PP^n}(td))).
    \end{align*}
    It then follows that 
    \begin{align*}
        (S/I^t)_{td}&\overset{\text{\cite{yang}*{Lemma~2.1}}}{=}\bb{H}^0(\operatorname{cone}(\Sym^{t}(\mc{A}^\bullet) \lra \mc{O}_{\PP^n}(td)))\\
        &\overset{\text{\cite{yang}*{Lemma~4.1}}}{=} \bb{H}^0(\operatorname{cone}(\Sym^{t-1}(\mc{A}^\bullet) \otimes \mc{O}_{\PP^n}(d) \lra \mc{O}_{\PP^n}(td)))\\
        &\overset{\hphantom{\text{\cite{yang}*{Lemma~4.1}}}}{=} \coker(\bb{H}^0(\mc{D}^{-1,\bullet}) \lra \bb{H}^0(\mc{D}^{0,\bullet})).
    \end{align*}
    We claim that
    \begin{equation}\label{eq:tot=coker}
        \bb{H}^0(\operatorname{Tot}(\mc{D}^{\bullet,\bullet}_t)) = \coker(\bb{H}^0(\mc{D}^{-1,\bullet}) \lra \bb{H}^0(\mc{D}^{0,\bullet})).
    \end{equation}
    Since $\operatorname{Tot}(\mc{D}^{\bullet,\bullet}_t)$ is exact and $\bb{H}^0(\operatorname{Tot}(\mc{D}^{\bullet,\bullet}_t)) = 0$, the desired vanishing $(S/I^t)_{td} = 0$ follows from (\ref{eq:tot=coker}). We now prove the claim (\ref{eq:tot=coker}).
    Consider the spectral sequence $E$ associated to $\mc{D}^{\bullet,\bullet}_t$. By Lemma~\ref{lem:hypercohomologyVanishingQuotientSchur},
    \begin{align*}
        E_1^{-i-1,i} = \bb{H}^i(\mc{D}^{-i-1,\bullet}) = \bb{H}^i\left( \Sym^{t-i-1}(\mc{A}^\bullet) \otimes \bigwedge^i(\mc{B}^\bullet) \otimes \mc{O}_{\PP^n}(d)\right) &= 0 \quad \text{for all }1 \le i \le t-1,\\
        E_1^{-i,i} = \bb{H}^i(\mc{D}^{-i,\bullet}) = \bb{H}^i\left( \Sym^{t-i}(\mc{A}^\bullet) \otimes \bigwedge^{i-1}(\mc{B}^\bullet) \otimes \mc{O}_{\PP^n}(d)\right) &= 0 \quad \text{for all } 1 \le i \le t.
    \end{align*}
    It follows that the only potentially nonzero differential mapping into $E_1^{0,0}$ is 
    \[d_1 \colon \bb{H}^0(\mc{D}^{-1,\bullet}) =  E_1^{-1,0} \lra E_1^{0,0} = \bb{H}^0(\mc{D}^{0,\bullet})),\]
    and that $\coker(d_1) = E_\infty^{0,0} = \bb{H}^0(\operatorname{Tot}(\mc{D}^{\bullet,\bullet}_t))$. 
    This concludes the proof.
\end{proof}

\subsection*{Acknowledgment} I would like to thank Claudiu Raicu for his guidance, support, and valuable suggestions throughout
this project. The author acknowledges support from the Simons Dissertation Fellowship SFI-MPS-SDF-00023235, the Arthur J. Schmitt Fellowship.

\subsection*{AI disclosure} Some preliminary ideas related to Lemma~\ref{lem:SEShooks} were developed by the author in part through a conversation with ChatGPT.

\end{document}